\newcommand{\no}[1]{#1}
\renewcommand{\no}[1]{}
\renewcommand{\Delta}{\upDelta}}
\date{\today}
\newtheorem{theorem}{Theorem}[section]
\newtheorem{lemma}{Lemma}[section]
\theoremstyle{remark}
\newtheorem{remark}{Remark}[section]
\numberwithin{equation}{section}
\title[Neumann Green function]{Local boundedness property for parabolic BVP's and  the gaussian upper bound for their Green functions}
\author[Mourad Choulli]{Mourad Choulli}
\address{Institut \'Elie Cartan de Lorraine, UMR CNRS 7502, Universit\'e de Lorraine, B.P. 70239, 54506 Vandoeuvre-l\`es-Nancy Cedex, France}
\email{mourad.choulli@univ-lorraine.fr}
\date{}
\begin{document}

\begin{abstract}
In the present note, we give a concise proof for the equivalence between the local boundedness property for parabolic Dirichlet BVP's and  the gaussian upper bound for their Green functions. The parabolic equations we consider are of general divergence form and our proof is essentially based on the gaussian upper bound by Daners \cite{Da} and a Caccioppoli's type inequality. We also show how the same analysis enables us to get a weaker version of the local boundedness property for parabolic Neumann BVP's assuming that the corresponding Green functions satisfy a gaussian upper bound.

\medskip
\noindent
{\bf Key words :} Dirichlet (Neumann) Green function ; gaussian upper bound ; local boundedness property ; Caccioppoli's type inequality.

\medskip
\noindent
{\bf Mathematics subject classification 2010 :} 35K08
\end{abstract}

\maketitle

%\tableofcontents

%=========================================================
\section{Introduction}

We consider on $\Omega$, a bounded Lipschitz domain of $\mathbb{R}^n$, the general divergence form parabolic operator
\[
Lu=\partial _tu-\mbox{div}(A(x,t)\nabla u +uB(x,t))+C(x,t)\cdot \nabla u+d(x,t)u.
\]
We assume that, where $Q=\Omega \times (0,+\infty )$,  $A=(a_{ij})\in L^\infty (Q)^{n\times n}$, $B,C\in L^\infty (Q)^n$, $d\in L^\infty (Q)$ and the following uniform ellipticity condition holds true
\begin{equation}\label{1.1}
\lambda  |\xi |^2\leq a_{ij}(x,t)\xi _i\xi_j  ,\;\; \mbox{a.e.}\;(x,t)\in Q\; \mbox{and}\; \xi \in \mathbb{R}^n .
\end{equation}

\smallskip
Henceforth $H=L^2(\Omega )$ and $V=H_0^1(\Omega )$ (resp. $V=H^1(\Omega )$). 

\smallskip
The bilinear form associated to the operator $L$ takes the form
\[
a(t,u,v)=\int_\Omega \left(A\nabla u\cdot \nabla v+uB\cdot \nabla v+vC\cdot \nabla u+duv \right)dx,\;\;  u,v\in V.
\]

Let, where $0\leq s<T$,
\[
W=W(s,T,V,V')=\{u\in L^2((s,T),V);\; u'\in L^2((s,T),V')\}.
\]
$W$ is a Hilbert space  continuously embedded in $C([s,T],H)$ (e.g. \cite{DL}, Section XVIII.1.2) and the following Green's formula holds true
\begin{equation}\label{1.8}
\int_{s_1}^{s_2} \langle u'(s) ,v(s)\rangle ds+\int_{s_1}^{s_2} \langle u(s) ,v'(s)\rangle ds=u(s_2)v(s_2)-u(s_1)v(s_1),\;\; u,\, v\in W,
\end{equation}
where $s\leq s_1<s_2\leq T$ and $\langle \cdot ,\cdot\rangle$ is the duality pairing between $V$ and $V'$.

\smallskip
We say that $u\in W$ is a weak solution of the boundary value problem (abbreviated to BVP in the sequel)
\begin{equation}\label{1.2}
Lu =0\; \mbox{in}\; \Omega \times (s,T)\;\; \mbox{and}\;\; u=0\; \mbox{(resp.}\; A\nabla u+uB\cdot \nu =0) \; \mbox{on}\; \partial \Omega \times (s,T)
\end{equation}
if
\begin{equation}\label{1.5}
\langle u'(t) ,v\rangle  + a(t,u(t),v)=0\;\; \mbox{a.e., for any}\; v\in V.
\end{equation}
Here $\nu =(\nu _1,\ldots \nu _n)$ is the outward unit normal vector to $\partial \Omega$.

\smallskip
Under the assumption \eqref{1.1}, there exist operators $\mathscr{A}(t)\in \mathscr{B}(V,V')$ such that
\[
\langle \mathscr{A}(t)u,v\rangle =a(t,u,v),\;\; u,\, v\in V.
\]

\smallskip
As it is noticed in \cite{Da}, the family $(\mathscr{A}(t))_{t\geq 0}$ generates an evolution system $(U(t,s))_{0\leq s\leq t}$ of bounded operators on $H$. Specifically, for any $u_0\in H$,  $u(t)=U(t,s)u_0\in W$  is the weak solution of the abstract Cauchy problem
\begin{equation}\label{1.3}
\left\{
\begin{array}{ll}
u'(t)+\mathscr{A}(t)u(t)=0,\; \mbox{in}\; (s,T),
\\
u(s)=u_0,
\end{array}
\right.
\end{equation}
in the sense that 
\[
-\int_s^T \langle u(t), v'(t)\rangle dt+\int_s^T a(t;u(t),v(t))dt-\int_\Omega u_0v(s)=0,
\]
for any $v\in W$ satisfying  $v(T)=0$.

\smallskip
We observe that the weak solution of the Cauchy problem \eqref{1.3} is also a weak solution of the BVP \eqref{1.2}.

\smallskip
In the sequel, $C$ denotes a generic constant that can depend on $n$, $\Omega$ and the coefficients of $L$.

\smallskip
From Theorem 5.2 and Corollary 5.3 in \cite{Da}, we have the following estimates
\begin{align}
&\| U(t,s)\|_{2,2}\leq 1,\label{1.9}
\\
&\| U(t,s)\|_{2,\infty }\leq C(t-s)^{-\frac{n}{4}},\label{1.6}
\\
&\| U(t,s)\|_{1,\infty }\leq C(t-s)^{-\frac{n}{2}}.\label{1.7}
\end{align}
Here $\|\cdot \|_{p,q}$ denotes  the natural norm of $\mathscr{B}(L^p(\Omega ),L^q(\Omega ))$.

\smallskip
Therefore, according to the Dunford-Pettis theorem, $U(t,s)$ is an integral operator with kernel $G(\cdot ,t;\cdot ,s)\in L^\infty (\Omega \times \Omega )$, $0\leq s\leq t$:
\[
U(t,s)u_0(x)=\int_\Omega G(x,t;y,s)u_0(y)dy,\; u_0\in H.
\]

In the sequel we call $G$ the Dirichlet (resp. Neumann) Green function when $V=H_0^1(\Omega )$ (resp. $V=H^1(\Omega )$).

If
\[
\mathscr{G}(x,t)=(4\pi t)^{-n/2}e^{-\frac{|x|^2}{4t}},\;\; x\in \mathbb{R}^n,\; t>0,
\]
is the usual  gaussian kernel, we set 
\[
\mathscr{G}_c(x,t)=c^{-1}\mathscr{G}(\sqrt{c}x,t),\;\; c>0.
\]

From Theorem 6.1 in \cite{Da}, we know that $G$ satisfies the gaussian upper bound
\begin{equation}\label{1.4}
G(x,t;y,s)\leq \mathscr{G}_C(x-y,t-s).
\end{equation}

We need to introduce some notations. If $x\in \overline{\Omega}$ and $t>s$, we set
\begin{align*}
&I(r) =I(r)(t)=(t-r^2,t],
\\
&Q (r)=Q (r)(x,t)=[B(x,r)\cap \Omega ]\times I_r. 
\end{align*}
Here, $r$ is a small parameter always chosen in such a way that $t-kr^2$ remains in the time interval under consideration, for different values of $k$ appearing in the sequel.

\smallskip
Following \cite{HK}, we say that  $L$ has the local boundedness property if, for any  weak solution $u$ of \eqref{1.2} and any $x\in \overline{\Omega}$, we have
\[
\| u\|_{L^\infty (Q(r))}\leq Cr^{-\frac{n+2}{2}}\|u\|_{L^2(Q(2r))}.
\]

We aim to prove the following result.

\begin{theorem}\label{theorem1.1}
If $L$ possesses the local boundedness property, then $G$ satisfies the  gaussian upper bound \eqref{1.4}. Conversely, under the additional assumption that $A\in L^\infty ((0,\infty ),W^{1,\infty} (\Omega )^{n\times n})$, if the Dirichlet Green function $G$ satisfies the  gaussian upper bound \eqref{1.4}, then $L$ possesses the local boundedness property.
\end{theorem}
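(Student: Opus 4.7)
My plan is to treat the two implications separately, since the first is essentially a formal consequence of the framework in place while the second contains the analytic substance.

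For the forward direction one could simply invoke Theorem~6.1 of \cite{Da}, which already guarantees \eqref{1.4} unconditionally in the present setting; nevertheless a direct argument from local boundedness runs as follows. Fix $(x_0, t_0)$ and $(y_0, s_0)$ with $t_0 > s_0$. The function $(x, t) \mapsto G(x, t; y_0, s_0)$ is a weak solution of $Lu = 0$ away from $(y_0, s_0)$, so choosing $r \sim \sqrt{t_0 - s_0}$ small enough that $(y_0, s_0)$ lies outside $Q(2r)(x_0, t_0)$, the local boundedness property yields
\[
G(x_0, t_0; y_0, s_0) \le C r^{-(n+2)/2} \|G(\cdot, \cdot; y_0, s_0)\|_{L^2(Q(2r)(x_0, t_0))}.
\]
The $L^2$ norm on the right is controlled on-diagonally via \eqref{1.6}; the exponential decay factor $e^{-c|x_0 - y_0|^2/(t_0 - s_0)}$ is then recovered by Davies' perturbation trick, namely by substituting $e^{\alpha \cdot x} G$ for $G$, exploiting the resulting perturbed energy estimate, and optimizing in $\alpha$.

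For the converse direction the strategy is localization by a space-time cut-off coupled with the Duhamel representation for the Dirichlet problem. Let $u$ be a weak solution of $Lu = 0$ in a cylinder containing $Q(2r)(x_0, t_0)$, and choose a smooth cut-off $\phi$ equal to $1$ on $Q(r)$ and supported in $Q(3r/2)$, with $|\nabla \phi| \le C/r$ and $|\partial_t \phi| \le C/r^2$. A direct computation shows that $v := \phi u$ solves
\[
L v = f - \operatorname{div}(F) \quad \text{on } \Omega \times (t_0 - 4r^2, t_0], \qquad v(\cdot, t_0 - 4r^2) \equiv 0,
\]
where $f$ and $F$ are supported in $Q(2r) \setminus Q(r)$ and linear in $u$ and $\nabla u$ multiplied by derivatives of $\phi$. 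Since $v$ vanishes on $\partial \Omega$, applying Duhamel's formula for the Dirichlet Green function and integrating by parts in $y$ gives
\[
v(x, t) = \int_{t_0 - 4r^2}^t \int_\Omega \bigl[ G(x, t; y, \tau) f(y, \tau) + \nabla_y G(x, t; y, \tau) \cdot F(y, \tau) \bigr] dy\, d\tau.
\]
Taking $L^\infty$-norms on $Q(r)$ and invoking the gaussian upper bound \eqref{1.4}, together with an analogous gaussian estimate on $|\nabla_y G|$, reduces the problem to controlling integrals of $|u|$ and $|\nabla u|$ over $Q(2r) \setminus Q(r)$. A Caccioppoli-type inequality on the annular region then replaces $\|\nabla u\|_{L^2(Q(2r) \setminus Q(r))}$ by $r^{-1}\|u\|_{L^2(Q(2r))}$, yielding the desired bound $\|u\|_{L^\infty(Q(r))} \le C r^{-(n+2)/2}\|u\|_{L^2(Q(2r))}$.

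The principal obstacle is the gradient term $\nabla u$ in the source for $v$: after integration by parts in the Duhamel formula it is transferred onto $\nabla_y G$, whose pointwise gaussian control is precisely what forces the extra hypothesis $A \in L^\infty((0, \infty), W^{1,\infty}(\Omega)^{n \times n})$. The gaussian upper bound on $G$ alone does not handle this contribution, and the Caccioppoli estimate on the annulus is the second essential ingredient needed to close the argument.
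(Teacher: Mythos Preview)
Your forward direction is essentially the same as the paper's: both ultimately reduce to the $L^2 \to L^\infty$ ultracontractivity bound and then appeal to Daners (Theorem~6.1 in \cite{Da}) or, equivalently, a Davies perturbation.

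The converse, however, has a genuine gap. You write the commutator source in divergence form $f - \operatorname{div}(F)$ and integrate by parts in Duhamel so as to land on $\nabla_y G \cdot F$. This forces you to invoke ``an analogous gaussian estimate on $|\nabla_y G|$'', which is \emph{not} part of the hypotheses and does not follow from the gaussian bound \eqref{1.4} on $G$ alone. You then attribute the extra assumption $A\in L^\infty((0,\infty),W^{1,\infty})$ to securing this gradient bound, but that is not its role here, and no such gradient estimate is established in the paper.

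The paper avoids $\nabla_y G$ entirely. It expands the divergence using $A\in W^{1,\infty}$ so that the full source $f=[L,v]u=f_1+f_2$ lies in $L^2$; this is exactly where (and only where) the Lipschitz regularity of $A$ enters. Then a plain Cauchy--Schwarz against $G$ suffices, once one exploits the \emph{support geometry}: $f_1$ is supported where $\tau-\rho\gtrsim r^2$, and $f_2$ is supported in a spatial annulus $\{|z-y|\gtrsim r\}$. On that annulus one computes $\int \mathscr{G}_C^2 \le C r^{2-n}$ (the extra $r^2$ coming from the spatial separation), which is precisely the gain needed to absorb the factor $r^{-1}$ hitting $\nabla u$ before the Caccioppoli step. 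Your outline of the Caccioppoli closure is correct, but the mechanism you propose for handling the gradient term---passing it to $\nabla_y G$---is the step that fails.
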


We mention that S. Hofmann and S. Kim \cite{HK} proved the equivalence between the local boundedness property for a parabolic system and the gaussian upper bound for the corresponding fundamental solution. 
%This equivalence for Neumann Green function for a parabolic system was established by Choi and Kim \cite{CK}, under the assumption that the weak solutions %of the corresponding IBVP's possess an interior H\"older regularity.

%=================================

\section{Proof of the main theorem}

Henceforth, the $L^p$-norm, $1\leq p\leq \infty$, is denoted by $\|\cdot \|_p$.
\begin{proof}[Proof of Theorem \ref{theorem1.1}]
Let us first assume that $L$ has the local boundedness property and we pick $u_0\in H$. As $u=U(t,s)u_0$ is a weak solution of equation  \eqref{1.5}, we have, by using the local boundedness property,
\begin{align*}
|u(x,t)|^2&\leq C(\sqrt{t-s})^{-(n+2)}\int_{t-(\sqrt{t-s})^2}^t\int_{B(x,\sqrt{t-s})\cap \Omega }u^2(y,\tau )dyd\tau \;\; \mbox{a.e.}
\\
&\leq C(t-s)^{-\frac{n+2}{2}}\int_{t-(\sqrt{t-s})^2}^t\|U(\tau ,s)u_0\|_2^2d\tau
\\
&\leq C(t-s)^{-\frac{n}{2}}\|u_0\|_2^2 \;\;\; \mbox{(by \eqref{1.9})}. 
\end{align*}
Then
\[
\|U(t,s)u_0\|_{\infty}\leq C(t-s)^{-\frac{n}{4}}\|u_0\|_2.
\]
In other words, we proved
\[
\|U(t,s)\|_{2,\infty}\leq C(t-s)^{-\frac{n}{4}}.
\]
This is exactly the estimate \eqref{1.6}, which in turn implies, by duality, \eqref{1.7}. The gaussian upper bound follows then from Theorem 6.1 in \cite{Da} and its proof.

\smallskip
The proof of the converse is based on the following  parabolic Caccioppoli's type inequality, that we prove later.

\begin{lemma}\label{lemma1.1}
Let $u$ be a weak solution of \eqref{1.2}. Then
\[
\sup_{\tau \in I_r}\int_{B(x,r)\cap \Omega} u^2(\cdot ,\tau )+\int_{Q(\theta _1r)}|\nabla u|^2 \leq C\theta r^{-2}\int_{Q(\theta _2r)}u^2,
\]
for any $0<\theta _1<\theta _2$, where $\theta =(\theta _2^2-\theta _1^2)^{-1}+(\theta _2-\theta _1)^{-2}$.
\end{lemma}

We pursue the proof by assuming that $A\in L^\infty ((0,\infty ),W^{1,\infty} (\Omega )^{n\times n})$ and $G$ satisfies the gaussian upper bound \eqref{1.4}. We consider $\varphi \in C^\infty (\mathbb{R})$ satisfying, $0\leq \varphi \leq 1$, $\varphi =1$ in a neighborhood of $I(\frac{5}{4}r)$, $\varphi =0$ in a neighborhood of $(-\infty ,t-(\frac{3}{2}r)^2)$ and $|\varphi '|\leq cr^{-2}$. Let $\psi \in C_c^\infty (B(x,\frac{3}{2}r))$ such that $0\leq \psi \leq 1$, $\psi =1$ in a neighborhood of $B(x,\frac{5}{4}r)$ and $|\partial ^\alpha \psi |\leq cr^{-|\alpha |}$, for any $\alpha \in \mathbb{N}^n$, $|\alpha |\leq 2$, for some universal constant $c$.

\smallskip
Let $u\in W$ be a weak solution of \eqref{1.2} and $v=\varphi \psi$. In light of the identity $L(vu)=vLu+[L,v]u\,$\footnote{Here $[\cdot ,\cdot ]$ is the usual commutator.}, Duhamel's formula and the fact that $vu=0$ on $\partial \Omega \times (0,+\infty )$, we get
\begin{equation}\label{2.5}
(vu)(z,\tau )=\int_s^\tau \int_\Omega G(z,\tau ;y,\rho )f(y,\rho )dyd\rho ,\;\; \mbox{a.e.}\; (z,\tau )\in Q(r).
\end{equation}
Here we set $f=[L,v]u=f_1+f_2$, with
\begin{align*}
&f_1=u\varphi '\psi 
\\
&f_2= \varphi \nabla \psi \cdot (A\nabla u+uB)-\varphi \mbox{div}(uA\nabla \psi)+\varphi uC\cdot \nabla \psi.
\end{align*}

We need in the sequel that $f_1,f_2\in L^2(\Omega )$. This explains why we assumed that $A\in L^\infty ((0,\infty ),W^{1,\infty} (\Omega )^{n\times n})$.

\smallskip
Since $\mbox{supp}(f_1)\subset Q_1(r)=[B(x,\frac{3}{2}r)\cap \Omega ]\times (t-(\frac{3}{2}r)^2,t-(\frac{5}{4}r)^2)$, we show by elementary calculations
\begin{align*}
&\left| \int_s^\tau \int_\Omega G(z,\tau ;y,\rho )f_1(y,\rho ) dyd\rho \right|^2 = \left| \int_{Q_1(r)} G(z,\tau ;y,\rho )f_1(y,\rho )dyd\rho \right|^2
\\
&\hskip 4cm\leq \int_{Q_1(r)} G^2(z,\tau ;y,\rho )dyd\rho\int_{Q_1(r)} f_1^2(y,\rho )dyd\rho
\\
&\hskip 4cm\leq \int_{Q_1(r)}\mathscr{G}^2_C(z-y,\tau -\rho )dyd\rho \|f_1\|^2_{L^2(Q(\frac{3}{2}r))}
\\
&\hskip 4cm\leq Cr^{-n}\|f_1\|^2_{L^2(Q(\frac{3}{2}r))}
\\
&\hskip 4cm\leq Cr^{-n-2}\|u\|^2_{L^2(Q(\frac{3}{2}r))}.
\end{align*}

Let $Q_2(r)=[\{B(x,\frac{3}{2}r)\setminus B(x,\frac{5}{4}r)\} \cap \Omega ]\times I(\frac{3}{2}r)$. Similar calculations to those in pages 489 and 490 of \cite{HK} give
\[
\int_{Q_2(r)}\mathscr{G}^2_C(z-y,\tau -\rho )dyd\rho \leq Cr^{2-n}.
\]
Hence
\begin{align*}
&\left | \int_s^\tau \int_\Omega G(z,\tau ;y,\rho )f_2(y,\rho )dyd\rho\right |^2 = \left | \int_{Q_2(r)} G(z,\tau ;y,\rho )f_2(y,\rho )dyd\rho\right |^2
\\
&\hskip 4cm\leq \int_{Q_2(r)} G^2(z,\tau ;y,\rho )dyd\rho\int_{Q_2(r)} f_2^2(y,\rho )dyd\rho
\\
&\hskip 4cm\leq \int_{Q_2(r)}\mathscr{G}^2_C(z-y,\tau -\rho )dyd\rho \|f_2\|^2_{L^2(Q(\frac{3}{2}r))}
\\
&\hskip 4cm\leq Cr^{-n+2}\|f_2\|^2_{L^2(Q(\frac{3}{2}r))}
\\
&\hskip 4cm\leq Cr^{-n}\left(\|\nabla u\|^2_{L^2(Q(\frac{3}{2}r))}+r^{-2}\|u\|^2_{L^2(Q(\frac{3}{2}r))}\right).
\end{align*}
Applying Lemma \ref{lemma1.1} with $\theta _1=\frac{3}{2}$ and $\theta _2=2$, we find 
\[
\|\nabla u\|^2_{L^2(Q(\frac{3}{2}r))}\leq Cr^{-2}\| u\|^2_{L^2(Q(2r))}
\]
and then
\[
\left | \int_s^\tau \int_\Omega G(z,\tau ;y,\rho )f_2(y,\rho )dyd\rho\right |^2\leq Cr^{-n-2}\| u\|^2_{L^2(Q(2r))}.
\]
We end up getting
\[
|u(z,\tau )|\leq Cr^{-\frac{n+2}{2}}\| u\|^2_{L^2(Q(2r))},\;\; \mbox{a.e.}\; (z,\tau )\in Q(r).
\]
\end{proof}

The analysis we carry out in the converse part of the theorem above is no longer valid for the Neumann Green function because in that case there is an additional term in \eqref{2.5}. Precisely, we have in place of \eqref{2.5}
\begin{equation}\label{2.6}
(vu)(z,\tau )=\int_s^\tau \int_\Omega G(z,\tau ;y,\rho )f(y,\rho )dyd\rho +\int_s^\tau \int_{\partial \Omega}G(z,\tau ;\sigma ,\rho )g(\sigma,\rho )d\sigma d\rho ,\;\; \mbox{a.e.}\; (z,\tau )\in Q(r).
\end{equation}
Here $g=-u\varphi A\nabla \psi \cdot \nu$.

\smallskip
Indeed, contrary to the Dirichlet case where $vu$ satisfies a homogeneous boundary condition, in the Neumann case $vu$ obeys to the following non homogeneous Neumann boundary
condition
\[
A\nabla (uv)+(uv)B\cdot \nu = -u\varphi A\nabla \psi \cdot \nu=g.
\]
However, $g$ is identically equal to zero if $\psi$ is chosen compactly supported in $\Omega$. So we can repeat the same argument as in the Dirichlet case to derive the following in interior local boundedness property: there is a constant $C>0$ so that, for any  weak solution $u$ of \eqref{1.2} in the Neumann case and any $x\in \Omega$, we have
\[
\| u\|_{L^\infty (Q(r))}\leq Cr^{-\frac{n+2}{2}}\|u\|_{L^2(Q(2r))},\;\; 0<r<\mbox{dist}(x,\partial \Omega ).
\]

We can also derive a weaker version of the local boundedness property. Let 
\[
\Sigma (r)=[B(x,r)\cap \partial \Omega ]\times I(r).
\]
The fact that $G$ is dominated by a gaussian kernel implies in a straightforward manner that the following estimate holds true:
\[
G(z,\tau ,\sigma ,\rho )\leq C|z-\sigma |^{-n+1/2}(\tau -\rho )^{-1/4}.
\]
Using this estimate, we easily obtain
\[
\left|  \int_s^\tau \int_{\partial \Omega}G(z,\tau ;\sigma ,\rho )g(\sigma,\rho )d\sigma d\rho\right|^2\leq Cr^{-2n-1}\|u\|^2_{L^2(\Sigma (3r/2))}.
\]
Therefore, we can assert that if the Neumann Green function satisfies a gaussian upper bound then its satisfies the following local boundedness property:
\begin{equation}\label{2.7}
\| u\|_{L^\infty (Q(r))}\leq Cr^{-\frac{n+2}{2}}\left(\|u\|_{L^2(Q(2r))}+r^{-1}\|u\|_{L^2(\Sigma (3r/2))}\right).
\end{equation}

The boundary term in \eqref{2.7} can be removed. To do that, we pick $\chi \in C_c^\infty (B(x,7r/4))$ satisfying $0\leq \chi \leq 1$,  $\chi =1$ in a neighborhood of  $B(x,3r/2)$ and $|\nabla \chi |\leq cr^{-1}$. Since the trace operator $w\in H^1(\Omega )\rightarrow w_{|\partial \Omega}$ is continuous, 
\[
\|u\|_{L^2(\Sigma (3r/2))}\leq \|\chi u\|_{L^2(\partial \Omega \times I(3r/2))}\leq C\|\chi u\|_{H^1(\Omega \times I(3r/2)}.
\]
On the other hand, by the help of Lemma \ref{lemma1.1}, we show
\[
\|\chi u\|_{H^1(\Omega \times I(3r/2))}\leq C\|\chi u\|_{H^1(Q(7r/2))}\leq Cr^{-1}\|u\|_{L^2(Q(2r))}.
\]
Hence
\[
\|u\|_{L^2(\Sigma (3r/2))}\leq Cr^{-1}\|u\|_{L^2(Q(2r))}.
\]
This estimate in \eqref{2.7} yields the following local boundedness property.
\[ 
\| u\|_{L^\infty (Q(r))}\leq Cr^{-\frac{n+6}{2}}\|u\|_{L^2(Q(2r))}.
\]

We point out that the second term in the right hand side of \eqref{2.6} has been forgotten in the proof of Theorem 3.24 in \cite[p. 2855]{CK}.

\begin{proof}[Proof of Lemma \ref{lemma1.1}]
We fix $0<\theta _1<\theta _2$ and we pick $\varphi \in C^\infty (\mathbb{R})$ satisfying $0\leq \varphi \leq 1$, $\varphi =1$ in a neighborhood of $I(\theta _1r)$, $\varphi =0$ in a neighborhood of $(-\infty ,t-(\theta _2r)^2)$ and $|\varphi '|\leq c(\theta _2^2-\theta _1^2)^{-1}r^{-2}$. We take also $\psi \in C_c^\infty (B(x,2r))$ such that $0\leq \psi \leq 1$, $\psi =1$ in a neighborhood of $B(x,r)$ and $|\partial ^\alpha \psi |\leq c((\theta _2-\theta _1)r)^{-|\alpha |}$, for any $\alpha \in \mathbb{N}^n$, $|\alpha |\leq 2$. Here $c$ is some universal constant.

\smallskip
We pick $\tau\in I(\theta _1r)$. If $u$ is a weak solution of \eqref{1.3}, we obtain, after taking $v=u(t)\psi ^2\varphi (t)$ as a test function in \eqref{1.5},
\[
\langle u'(t) ,u(t)\psi ^2\varphi (t)\rangle  + a(t,u(t),u(t)\psi ^2)\varphi (t)=0\;\; \mbox{a.e.}.
\]
Hence
\begin{equation}\label{1.10}
\int_{t-(\theta _2r)^2}^\tau\langle u'(t) ,u(t)\psi ^2\varphi (t)\rangle dt + \int_{t-(\theta _2r)^2}^\tau a(t,u(t),u(t)\psi ^2)\varphi (t)dt=0.
\end{equation}

By using Green's formula \eqref{1.8} between $s_1=t-(\theta _2r)^2$ and $s_2=\tau$, we get in a straightforward manner that
\begin{equation}\label{1.11}
\int_{t-(\theta _2r)^2}^\tau \langle u'(t),u(t)\psi ^2\varphi (t)\rangle dt=\frac{1}{2}\int_\Omega u^2(\tau )\psi ^2\varphi (\tau )dx-\frac{1}{2}\int_{t-(\theta _2r)^2}^\tau\int_\Omega u^2(t)\psi ^2\varphi '(t)dxdt.
\end{equation}

On the other hand, an elementary computation gives
\begin{equation}\label{1.12}
a(t,u(t),u(t)\psi ^2)=\int_\Omega A\nabla u\cdot \nabla u\psi ^2+\int_\Omega uE\cdot \psi \nabla u+\int_\Omega f_0u^2,
\end{equation}
where
\begin{align*}
&E=E(x,t)=2A\nabla \psi +\psi B+\psi C,
\\
&f_0=f_0(x,t)=2\psi B\cdot \nabla \psi +d\psi ^2.
\end{align*}
Let $f_1=f_0\varphi -(1/2)\psi ^2\varphi'$. Then a combination of \eqref{1.10}, \eqref{1.11} and \eqref{1.12} yields
\begin{align}
\frac{1}{2}\int_\Omega u^2(\tau )\psi ^2\varphi (\tau )dx+\int_{t-(\theta _2r)^2}^\tau&\int_\Omega A\nabla u\cdot \nabla u\psi ^2\varphi dxdt\label{1.13}
\\
&=-\int_{t-(\theta _2r)^2}^\tau\int_\Omega uE\cdot \psi \nabla u \varphi dxdt-\int_{t-(\theta _2r)^2}^\tau\int_\Omega f_1 u^2dxdt.\nonumber
\end{align}

In light of the convexity inequality
\[
-\int_\Omega uE\cdot \psi \nabla udx\leq \frac{1}{2\lambda}\int_\Omega |E|^2u^2dx+\int_\Omega \frac{\lambda}{2}|\nabla u|^2\psi ^2dx
\]
and the ellipticity condition \eqref{1.1}, we deduce from \eqref{1.13} that
\[
\frac{1}{2}\int_\Omega u^2(\tau )\psi ^2\varphi (\tau )dx+\frac{\lambda}{2}\int_{t-(\theta _2r)^2}^\tau\int_\Omega |\nabla u|^2\psi ^2\varphi dxdt
\leq \int_{t-(\theta _2r)^2}^\tau\int_\Omega \left( \frac{1}{2\lambda}|E|^2\varphi  +|f_1| \right) u^2dxdt
\]
We complete the proof using the estimates on the derivatives of $\psi$ and $\varphi$.
\end{proof} 

%===========================================================
%\bigskip
\small

\vskip .5cm
\end{document}